\documentclass[a4paper,abstract=on, enabledeprecatedfontcommands]{scrartcl}

\frenchspacing 
\linespread{1.2} 
\usepackage{geometry}
\geometry{
  a4paper,
  total={170mm,257mm},
  left=20mm,
  right=20mm,
  top=20mm,
  bottom=25mm
}

\usepackage{datetime2}
\DTMsetstyle{iso}
\usepackage{amsmath, amssymb, amsthm}
\usepackage{cases}
\usepackage{graphicx}
\usepackage{color}
\usepackage{hyperref}
\usepackage{enumitem}
\usepackage{ascmac}
\usepackage[T1]{fontenc}
\usepackage{lmodern}
\usepackage[T1,euler-digits]{eulervm}

\numberwithin{equation}{section}

\newtheorem{theorem}{Theorem}[section]
\newtheorem{proposition}[theorem]{Proposition}

\theoremstyle{definition}
\newtheorem{definition}[theorem]{Definition}
\theoremstyle{remark}
\newtheorem{remark}[theorem]{Remark}
\newtheorem{lemma}[theorem]{Lemma}

\title{\textsf{Graph reliability evaluation via random $K$-out-of-$N$ systems}}

\author{
  Hiroaki Mohri%
  \thanks{
    Faculty of Commerce, Waseda University,
    Shinjuku-ku, Tokyo, Japan}
\and
  Jun-ichi Takeshita%
  \thanks{Research Institute of Science for Safety and Sustainability,
	National Institute of Advanced Industrial Science and Technology (AIST),
  Tsukuba,
  Japan. (\texttt{jun-takeshita@aist.go.jp})}
}

\date{\normalsize\DTMnow}

\begin{document}
\maketitle

\begin{abstract}
  The present study was concerned with network failure problems for simple connected undirected graphs.
  A connected graph becomes unconnected through edge failure, under the assumptions that only edges can fail and each edge has an identical failure distribution.
  The main purpose of the present study was to show recurrent relations with respect to the number of edges in graph generation procedures.
  To this end, simple connected undirected graphs were correlated to random $K$-out-of-$N$ systems, and key features of such systems were applied. In addition, some simple graph cases and examples were analyzed.

  \noindent
\textbf{keywords}: Simple connected undirected graphs, network failure problems, random $K$-out-of-$N$ systems, recurrent relations
\end{abstract}

\section{Introduction}

The present study was concerned with simple connected undirected graphs of the type, $G=(V(G),E(G))$,
where each element in $V(G)$ is called a vertex, and each element in $E(G) \subset V(G)^2$ is called an edge,
which comprises a pair of vertices.
A graph is said to be undirected if its edges do not have a direction.
A graph is said to be simple if there are not more than two edges between the same pair of vertices, and connected if there is at least one path between any pair of vertices.
As graphs are useful for illustrating network structures, we sometimes use them to model infrastructures such as electrical, transportation, and water networks in and among cities. One of the primary analytical interests regarding such infrastructures is reliability evaluation.

There are two main fields concerned with graph reliability: reliability engineering and discrete system study.
The former assesses the reliability of redundant systems with two special vertices (a sink and a source), under the assumption that each vertex and each edge has a failure distribution function that depends on time;
and typically analyzes the failure distribution function for whole systems, mean time to failure, optimal replacement policy, etc. (see Barlow and Proschan~\cite{Barlow.Proschan_1965__MathematicalTheory} and Nakagawa~\cite{Nakagawa_2008__AdvancedReliability}, for fundamental texts in this field).
The latter assesses reliability in terms of the probability of connectedness in graphs with a constant probability of edge failure (e.g., Colbourn\cite{Colbourn_1987__CombinatoricsNetwork}), seeking to properly estimate or precisely calculate the reliability of very large graphs, based on the connectivity among the vertices in the graphs. 

In modeling infrastructural networks and evaluating their reliability,
we should consider connectivity among all the vertices,
under the assumption that each vertex and edge has a failure function that depends on time.
However, the studies cited above do not include such reliability analysis.

The present study, then, focused on simple undirected connected graphs,
and assumed that only edges can fail in the graphs,
based on a failure distribution function that depends on time.
When a connected graph becomes unconnected through edge failure,
we call it ``Network failure.''
In order to model network failure problems,
we used the idea of random $K$-out-of-$N$ systems,
originally proposed by Ito and Nakagawa~\cite{Ito.Nakagawa_2019_Systemsengineering:reliabilityanalysisusingk-out-of-nstructures_ReliabilityProperties}.
The system operates when at least $K$ units out of a total of $N$ units operate,
and $K$ is a random variable.
When each edge is considered to correspond to a unit in a random $K$-out-of-$N$ system,
we can analyze the graph reliability by applying the knowledge of random $K$-out-of-$N$ systems.
To determine the probability distribution $P(K \leq k)$ for a given graph,
we show the recurrent relations with respect to the number of edges in the graph generation procedure.

The present paper is organized as follows:
Section 2 summarizes basic notions of graphs and $K$-out-of-$N$ systems,
as well as preliminary facts.
Section 3 defines graph reliability,
and Section 4 summarizes the main results of the study and provides relevant proofs.
Section 5 describes the results of simple graph-structure cases,
and Section 6 provides an example of graph generation.

\section{Preliminaries}

\subsection{Basic graph definitions}

Let $G=(V(G), E(G))$ be a graph, where $V(G)$ and $E(G)$ are the sets of vertices and edges of $G$, respectively.
First, we prepare some common terms in graph theory.
There are several good texts on graph theory,
but we mainly refer to two works:
Bondy and Murty~\cite{Bondy.Murty_2008__GraphTheory}, \cite{Bondy.Murty_1976__GraphTheory}.

A vertex is \emph{incident} to an edge if the vertex is one of the two vertices connected by the edge.
The two incident vertices are said to be \emph{adjacent} and are called \emph{neighbors}.
An edge is a \emph{loop} if the two incident vertices are identical;
while an edge is a \emph{link} if the two incident vertices are distinct.
Two or more links are \emph{parallel edges} if the links have the same two incident vertices.

\begin{definition}[walk, path, closed walk]\mbox{}
  \begin{itemize}[nosep]
  \item A \emph{walk} in $G$ is a sequence of alternating vertices and edges,
    $\pi = v_0 e_1 v_1 e_2 \ldots e_k v_k$,
    such that the two incident vertices of $e_i \ (i=1, \ldots, k)$ are $v_{i-1}$ and $v_i$;
    specifically, the walk $\pi$ is called $v_0 v_k$-walk.
  \item A walk $\pi$ is called a \emph{path} if the vertices $v_i \ (i=0, \ldots, k)$ and the edges $e_j \ (j=1, \dots, k)$ are distinct;
    specifically, the path $\pi$ is called $v_0 v_k$-path.
  \item A walk $\pi$ is said to be \emph{closed} if the first vertex $v_0$ and the final vertex $v_k$ are identical.
  \end{itemize}  
\end{definition}

In the present study, we consider only simple connected undirected graphs,
which are defined as follows.
\begin{definition}[undirected, simple, connected]\mbox{}
  \begin{itemize}[nosep]
  \item An \emph{undirected graph} is a graph in which all the edges are bidirectional. 
  \item A graph is said to be \emph{simple} if the graph has neither loops nor parallel edges;
  \item A graph is said to be \emph{connected} if, for any two distinct vertices $v_1, v_2 \in V(G)$, there is a walk between them.
  \end{itemize}
\end{definition}


\subsection{Random $K$-out-of-$N$ systems}

In this subsection, we summarize key features of conventional $K$-out-of-$N$ systems (Barlow and Proschan~\cite{Barlow.Proschan_1965__MathematicalTheory}) and random $K$-out-of-$N$ systems.
The random type was originally introduced by Ito and Nakagawa~\cite{Ito.Nakagawa_2019_Systemsengineering:reliabilityanalysisusingk-out-of-nstructures_ReliabilityProperties}.

\begin{definition}[Conventional and random $K$-out-of-$N$ system]\mbox{}
  Both conventional and random $K$-out-of-$N$ systems cannot work if and only if at least $K$ units of the total $N$ units are not operaional.
  \begin{itemize}[nosep]
  \item the system is called a \emph{conventional} $K$-out-of-$N$ system if both $K$ and $N$ are constant;
  \item the system is called a \emph{random} $K$-out-of-$N$ system if $N$ is constant but $K$ is a random variable.
  \end{itemize}
\end{definition}

For random $K$-out-of-$N$ systems,
Ito and Nakagawa~\cite{Ito.Nakagawa_2019_Systemsengineering:reliabilityanalysisusingk-out-of-nstructures_ReliabilityProperties} derived the failure function of the system at time $t$ as follows.
\begin{lemma}\label{lem:rkofn_failfunc}
  Suppose that, for a given constant integer $N$,
  $K$ is a random variable with the probability function $p_k := P(K = k) \ (k=1, \ldots, N)$,
  and each unit has the identical failure distribution function $F(t) \ (t \geq 0)$.
  Then the failure distribution function of the system at time $t$, say $\mathcal{F}(t)$, is given by
  \begin{equation}
    \label{eq:1}
    \mathcal{F}(t)
    = \sum_{k=1}^N P_k \binom{N}{k} \left(1 - F(t)\right)^k F(t)^{N-k}.
  \end{equation}
\end{lemma}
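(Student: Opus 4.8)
The plan is to derive \eqref{eq:1} by conditioning on the number of units that are operational at time $t$. The argument rests on two facts. First, since the $N$ units fail independently according to the common distribution $F$, the number $M$ of operational units at time $t$ is binomially distributed, $P(M = k) = \binom{N}{k}\bigl(1-F(t)\bigr)^{k} F(t)^{N-k}$ for $k = 0,1,\dots,N$; this is exactly where the factors $\binom{N}{k}(1-F(t))^{k}F(t)^{N-k}$ in \eqref{eq:1} originate. Second, the random threshold $K$ is taken to be independent of the states of the individual units (an assumption used implicitly throughout).

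With $M$ fixed, the failure criterion reduces to an event about $K$ alone. By the definition of a random $K$-out-of-$N$ system, on $\{M = k\}$ there are $N-k$ non-operational units, and the system is down precisely when this number is at least $K$, i.e.\ when $K \le N-k$. Using the independence of $K$ from the unit states, $P(\text{system is down}\mid M = k) = P(K \le N-k)$, which is the cumulative probability of $K$ that the statement abbreviates by $P_k$ (note that this is not the point mass $p_k$).

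The law of total probability over the value of $M$ then gives
\[
  \mathcal{F}(t) \;=\; \sum_{k} P(M = k)\,P(\text{system is down}\mid M = k) \;=\; \sum_{k} P_k \binom{N}{k}\bigl(1-F(t)\bigr)^{k}F(t)^{N-k},
\]
and after discarding the boundary term that vanishes (the one carrying $P(K \le 0)=0$) this is precisely \eqref{eq:1}. An equivalent route is to condition on $K$ instead: for fixed $K = k$ the system is an ordinary conventional $k$-out-of-$N$ system, whose failure function is the binomial tail sum recalled from Barlow and Proschan~\cite{Barlow.Proschan_1965__MathematicalTheory}, after which averaging over the law of $K$ and interchanging the (finite) order of summation yields the same single-sum expression.

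The binomial counting and the rearrangement of the double sum are routine; the only delicate point is parsing the failure criterion correctly — where the inequality boundary sits ($K \le N-k$ versus $K < N-k$), and hence the exact summation range and the precise meaning of $P_k$ — together with making explicit the independence of $K$ from the component lifetimes, which the hypotheses use but do not state.
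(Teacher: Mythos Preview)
The paper does not actually prove this lemma: it is quoted from Ito and Nakagawa~\cite{Ito.Nakagawa_2019_Systemsengineering:reliabilityanalysisusingk-out-of-nstructures_ReliabilityProperties}, followed only by a remark about a sign convention, so there is no argument in the paper to compare your attempt against.

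On its own merits, your conditioning argument is the standard one and is sound. Conditioning on the number $M$ of operational units (or, equivalently, on the number $N-M$ of failed units), using the binomial law for $M$ coming from the i.i.d.\ assumption on the components, and then invoking independence of $K$ from the component states to reduce the conditional failure event to $\{K\le N-M\}$ is exactly how one derives such a formula. Your alternative route---fix $K=k$, use the conventional $k$-out-of-$N$ failure function, then average over the law of $K$ and swap the finite sums---is equally valid and is in fact the derivation Ito and Nakagawa give.

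You are also right to flag the ``delicate point'': under the paper's own failure criterion (the system fails when at least $K$ units have failed), the coefficient attached to $\binom{N}{k}(1-F(t))^{k}F(t)^{N-k}$ is $P(K\le N-k)$, whereas elsewhere in the paper $P_k$ is defined as $P(K\le k)$. If instead one lets $k$ index the number of \emph{failed} units, the coefficient is $P(K\le k)$ but then the binomial factor should read $\binom{N}{k}F(t)^{k}(1-F(t))^{N-k}$. Either way, the displayed formula \eqref{eq:1} as written does not line up cleanly with the paper's later convention $P_k=P(K\le k)$; your caveat about ``the precise meaning of $P_k$'' is therefore well placed rather than excessive caution.
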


\begin{remark} Ito and Nakagawa~\cite{Ito.Nakagawa_2019_Systemsengineering:reliabilityanalysisusingk-out-of-nstructures_ReliabilityProperties} originally derived the reliability of the system at time $t$.
  Thus, the original $P_k$ corresponds to $1-P_k$ in the present study.
\end{remark}

\section{Formulation of the graph reliability}

In the present study, we assume that only the edges of simple connected undirected graphs can fail,
and we define graph failure as follows.
\begin{definition}[Failure of graphs]
  A simple connected undirected graph $G=(V(G),E(G))$ is said to be \emph{in fail} if it is not connected.
\end{definition}

The number of failed edeges that result in graph failure depends on the structure of the graph.
In other words, which and how many edges fail determines whether the graph is in fail or not.
For a given simple connected undirected graph $G=(V(G),E(G))$ with $|E(G)|=M$,
we consider each edge of the graph to correspond to a unit in a random $K$-out-of-$M$ system.
Also, we assume that each edge has an identical independent failure distribution function $F(t)$.
Then the distribution function of $K$, that is $P_k := P(K \leq k) \ (k=1, \ldots, M)$,
corresponds to the probability that the graph becomes in fail when choosing $k$ out of $M$ edges.
Thus, we can determine the failure function of a graph by determining the distribution function of $K$ in the corresponding random $K$-out-of-$M$ system.


\section{Main theorems}

First, in order to describe the main results, we introduce some terms.
\begin{definition}[$G$-disconnected set, $G$-cut set]
  Let $G=(V(G), E(G))$ be a simple connected undirected graph.
  Then
  \begin{itemize}[nosep]
  \item an edge set $F \subset E(G)$ is called a \emph{$G$-disconnected set} if the graph $G-F$ is not connected;
  \item an edge set $F \subset E(G)$ is called a \emph{$G$-cut set} if $F$ is a $G$-disconnected set,
    but for any proper subset $F' \subsetneq F$, $F'$ is not a $G$-disconnected set.
  \end{itemize}
  Further, if $|F| = k$, we call its size $k$.
\end{definition}

\begin{definition}[$uv$-disconnected set, $uv$-cut set]
  Let $u,v \in V(G)$ be two distinct vertices of a graph $G=(V(G), E(G))$, and $F \subset E(G)$.
  Then
  \begin{itemize}[nosep]
  \item an edge set $F \subset E(G)$ is called a \emph{$uv$-disconnected set} of graph $G$ if no paths from $u$ to $v$ exist in the graph $G-F$;
  \item an edge set $F \subset E(G)$ is called  a \emph{$uv$-cut set} of graph $G$ if $F$ is a $uv$-disconnected set,
    but for any proper subset $F' \subsetneq F$, $F'$ is not a $uv$-disconnected set.
  \end{itemize}
  Further, if $|F| = k$, we call its size $k$.

  See Figure~\ref{fig:cutset} for examples of $uv$-disconnected and $uv$-cut sets.
\begin{figure}[htbp]
  \centering
  \includegraphics[page=3,scale=.5]{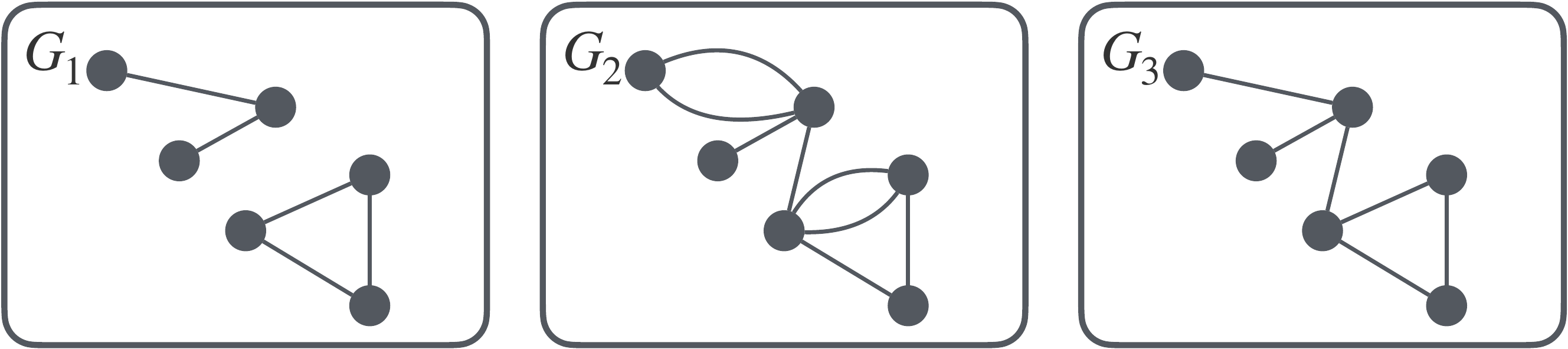}
  \caption{The edge set $\{e_1, e_2, e_3, e_4\}$ is a $uv$-disconnected set, but not a $uv$-cut set.
    However, the set includes two $uv$-cut sets, $\{e_1\}$ and $\{e_2, e_3\}$.  \label{fig:cutset}}
\end{figure}
\end{definition}

Using the terms above,
the failure of a graph $G$,
when $k$ edges have failed,
is equivalent to choosing a $G$-disconnected set of the graph $G$ with size $k$.
Hence, the distribution function $P_k$ of the corresponding random $K$-out-of-$M$ system is the ratio of $G$-disconnected sets with size $k$ to all the edge sets with size $k$.

Let $H=(V(H),E(H))$ be a simple connected undirected graph with $N$ vertices and $M-1$ edges
(that is, $|V(H)|=N$ and $|E(H)|=M-1$);
and $P_k^H := P(K \leq k)$ is the distribution function of $K$ in the corresponding random $K$-out-of-$(M-1)$ system.
We consider a simple connected undirected graph $G=(V(G),E(G))$, constructed by adding an edge to the graph $H$.
Then $|V(G)|$ is either $N$ or $N+1$, and $|E(G)|=M$.
In fact, if an end of the additional edge is a vertex $v \in V(H)$ and the other end is a new vertex,
then $|V(G)|=N+1$ and $|E(G)|=M$;
while, if the ends of the additional edges are two discontiguous vertices $u, v \in V(H) \ (u \neq v)$,
then $|V(G)|=N$ and $|E(G)|=M$.

For the case $|V(G)|=N+1$, the following holds.
\begin{theorem}\label{thm:N+1}
  Let $M \geq 2$ and $N \geq 1$ be integers,
  $H=(V(H), E(H))$ be a simple connected undirected graph with $|V(H)|=N$ and $|E(H)| = M-1$,
  and the distribution function of the corresponding random $K$-out-of-$(M-1)$ system be $P^H_
  k \ (k=1, \ldots, M-1)$.
  We take $v_0 \in V(H)$ and a new vertex $v^+ \not\in V(H)$,
  and construct a graph $G=(V(G), E(G))$ with $V(G):=V(H) \cup \{v^+\}$ and $E(G):=E(H) \cup \{(v_0 v^+)\}$.
  Also, let $P^G_k$ be the distribution function of the $K$-out-of-$M$ system corresponding to the graph $G$.
  Then $P^G_k$ can be described by using $P^H_k$, as follows:
  \begin{equation}
    \label{eq:N+1}
      P^G_k =
      \begin{cases}
        \displaystyle \left[ P^H_k \binom{M-1}{k} + \binom{M-1}{k-1} \right] \Big/ \binom{M}{k} & \text{if} \quad  1 \leq k \leq M-1 \text{ and } P^G_{k-1} < 1,\\[10pt]
        1 & \text{otherwise}.
      \end{cases}
    \end{equation}
    We here define $\binom{M-1}{0} = 1$ for any $M \geq 2$.
  \end{theorem}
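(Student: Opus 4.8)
The plan is to compute $P^G_k$ directly from its combinatorial meaning fixed in Section~3: for a simple connected undirected graph $\Gamma$ with $m$ edges, $P^\Gamma_k$ is the number of $\Gamma$-disconnected sets of size $k$ divided by $\binom{m}{k}$. Denote that numerator by $D^\Gamma_k$, so $P^G_k = D^G_k/\binom{M}{k}$ and $D^H_k = P^H_k\binom{M-1}{k}$. Everything hinges on one structural remark: the new edge $e^+ := (v_0 v^+)$ is the only edge of $G$ incident with $v^+$, so $v^+$ has degree $1$ in $G$; consequently no walk of $G$ can pass \emph{through} $v^+$, and $v^+$ becomes isolated as soon as $e^+$ is deleted.

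First I would fix $k$ with $1 \le k \le M-1$ and split the $G$-disconnected sets $F \subseteq E(G)$ of size $k$ according to whether $e^+ \in F$. If $e^+ \in F$, then $v^+$ is isolated in $G-F$ and, since $|V(G)|=N+1\ge 2$, the graph $G-F$ is disconnected; hence every such $F$ is a $G$-disconnected set, and writing $F=\{e^+\}\cup F'$ with $F'\subseteq E(H)$, $|F'|=k-1$, there are exactly $\binom{M-1}{k-1}$ of them (using $\binom{M-1}{0}=1$ when $k=1$). If $e^+\notin F$, then $F\subseteq E(H)$, and I claim $G-F$ is disconnected if and only if $H-F$ is: every walk of $G-F$ between two vertices of $H$ avoids $v^+$ (a degree-$1$ vertex is never internal to a walk) and so lies in $H-F$, while $v^+$ stays attached to $v_0\in V(H)$ through the surviving edge $e^+$; hence $V(G)$ splits in $G-F$ exactly when $V(H)$ splits in $H-F$. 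Therefore the $G$-disconnected sets of size $k$ missing $e^+$ are precisely the $H$-disconnected sets of size $k$, of which there are $D^H_k=P^H_k\binom{M-1}{k}$.

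Adding the two counts gives $D^G_k=P^H_k\binom{M-1}{k}+\binom{M-1}{k-1}$; dividing by $\binom{M}{k}=\binom{M-1}{k}+\binom{M-1}{k-1}$ (Pascal's rule) yields the first line of~\eqref{eq:N+1}, and the same identity shows this quotient automatically lies in $[0,1]$. For the ``otherwise'' line: when $k=M$ the only $M$-set is $E(G)$ and $G-E(G)$ consists of $N+1\ge 2$ isolated vertices, so $P^G_M=1$; and when $1\le k\le M-1$ but $P^G_{k-1}=1$, every $(k-1)$-subset of $E(G)$ is a $G$-disconnected set, so — deleting yet more edges from an already disconnected graph keeps it disconnected — every $k$-subset is as well, giving $P^G_k=1$. (Consistently, $P^G_{k-1}=1$ forces $P^H_{k-1}=1$ by the equivalence above, hence $P^H_k=1$, so the first-line formula would itself return $1$; likewise $P^G_{k-1}<1$ is automatic at $k=1$ since $P^G_0=0$.)

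The only step requiring care is the equivalence ``$G-F$ disconnected $\iff$ $H-F$ disconnected'' for $F\subseteq E(H)$: one must argue cleanly that hanging the pendant vertex $v^+$ off $H$ neither merges nor separates any pair of old vertices, and that as long as $e^+$ survives the connectivity of the augmented graph coincides with that of $H-F$. Once this is in place, the remainder is bookkeeping with binomial coefficients together with the monotonicity of the ``disconnected'' property under edge deletion.
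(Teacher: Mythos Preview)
Your proof is correct and follows essentially the same approach as the paper: both arguments partition the size-$k$ $G$-disconnected sets according to whether the new pendant edge $e^+$ lies in the set, counting $\binom{M-1}{k-1}$ sets in the first case and $P^H_k\binom{M-1}{k}$ in the second. Your write-up is in fact more careful than the paper's, since you explicitly justify why every set containing $e^+$ disconnects $G$ (isolating $v^+$) and why the equivalence $G-F$ disconnected $\iff$ $H-F$ disconnected holds when $e^+\notin F$---points the paper leaves implicit.
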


  \begin{proof}
  Consider a graph $G=(V(G), E(G))$, constructed by adding one vertex and one edge, say $e^+$.
  Then we can divide when the graph $G$'s failure as a result of $j \geq 1$ failing edges into two cases:
  \begin{enumerate}[label=(\roman*), nosep]
  \item The new edge fails;
  \item The new edge does not fail, but part of the graph $H$ fails.
  \end{enumerate}
  The corresponding $G$-disconnected sets with size $k$ in the two cases above are, respectively, $\{e^+\} \cup F \ (F \subset E(H), |F|=k-1)$ and $H$-disconnected sets with size $k$.
  Note that $F = \emptyset$ when $k=1$.
  The number of the sets $\{e^+\} \cup F \ (F \subset E(H), |F|=k-1)$ is $\binom{M-1}{k-1}$;
  while that of the $H$-connected sets with size $k$ is $P^H_k \binom{M-1}{k}$.

  Furthermore, it is clear that $P^G_{k} = 1$ if $P^G_{k-1} = 1$.
  Therefore, we have equation~\eqref{eq:N+1}. 
\end{proof}

For the case $|V(G)|=N$, the following holds.
\begin{theorem}\label{thm:N}
  Let $M \geq 2$ and $N \geq 1$ be integers,
  $H=(V(H), E(H))$ be a simple connected undirected graph with $|V(H)|=N$ and $|E(H)| = M-1$,
  and the distribution function of its corresponding random $K$-out-of-$(M-1)$ system be $P^H_
  k \ (k=1, \ldots, M-1)$.
  We take two discontiguous vertices $u, v \in V(H) \ (u \neq v)$ and construct a graph $G=(V(G), E(G))$ with $V(G):=V(H)$ and $E(G):=E(H) \cup \{(uv)\}$.
  Also, let $P^G_k$ be the distribution function of the $K$-out-of-$M$ system corresponding to the graph $G$.
  Then $P^G_k$ can be described by using $P^H_k$, as follows:
  \begin{equation}
    \label{eq:N}
    P^G_k =
    \begin{cases}
      \displaystyle \left[ P^H_k \binom{M-1}{k} + P^H_{k-1} \binom{M-1}{k-1} - n^H_k \right] \Big/ \binom{M}{k} & \text{if} \quad  1 \leq k \leq M-1 \text{ and } P^G_{k-1} < 1,\\[10pt]
      1 & \text{otherwise},
    \end{cases}
  \end{equation}
  where $n^H_k$ stands for the number of $H$-disconnected sets with size $k$, say $F^H_k$, which includes only one $uv$-cut set, say $F'$, and $F^H_k \setminus F'$ is not a $H$-disconnected set.
Also, we here define $P^H_0 \binom{M-1}{0} = 0$ for any $M \geq 2$.
\end{theorem}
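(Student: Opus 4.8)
The plan is to count the $G$-disconnected sets of size $k$ (this count equals $P^G_k\binom{M}{k}$) by splitting according to whether the new edge $e^+ := (uv)$ lies in the chosen set $S \subseteq E(G)$ with $|S| = k$, exactly as in the proof of Theorem~\ref{thm:N+1}. If $e^+ \in S$, write $S = \{e^+\} \cup F$ with $F \subseteq E(H)$ and $|F| = k-1$; since $G - S = H - F$, such an $S$ is $G$-disconnected if and only if $F$ is $H$-disconnected, so this case contributes $P^H_{k-1}\binom{M-1}{k-1}$ sets --- in particular $0$ when $k=1$, which matches the convention $P^H_0\binom{M-1}{0}=0$ since $G-\{e^+\}=H$ is connected.

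If $e^+ \notin S$, then $S \subseteq E(H)$ and $G - S = (H - S) + e^+$. Adding the single edge $e^+ = (uv)$ merges the components of $u$ and $v$ when these are distinct in $H - S$, and changes nothing otherwise; hence $G-S$ is disconnected if and only if $H-S$ is disconnected and it is \emph{not} the case that $H-S$ has exactly two components with $u$ and $v$ in different ones. Writing $m^H_k$ for the number of $S \subseteq E(H)$ with $|S|=k$ such that $H-S$ has exactly two components separating $u$ and $v$, this case contributes $P^H_k\binom{M-1}{k} - m^H_k$ sets. Summing the two cases,
\begin{equation*}
  P^G_k\binom{M}{k} \;=\; P^H_k\binom{M-1}{k} + P^H_{k-1}\binom{M-1}{k-1} - m^H_k ,
\end{equation*}
so everything reduces to proving the combinatorial identity $m^H_k = n^H_k$.

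I would prove $m^H_k = n^H_k$ by showing that the two underlying families of edge sets coincide. First I record that any $uv$-cut set $F'$ has the shape $F' = [A_0,B_0]_H$, the set of all $H$-edges joining a partition $V(H) = A_0 \sqcup B_0$ with $u \in A_0$, $v \in B_0$ (minimality of $F'$ forces every edge of $F'$ to join the two sides of $H - F'$, and connectedness of $H$ then forces $H-F'$ to have exactly two components). For $m^H_k \le n^H_k$: if $H-S$ has exactly the two components $A \ni u$, $B \ni v$, put $S_0 := [A,B]_H$; then $S_0 \subseteq S$, $S \setminus S_0 \subseteq E(H[A]) \cup E(H[B])$, and since the components $A,B$ of $H-S$ are connected so are $H[A], H[B]$, whence $S_0$ is a $uv$-cut set and $H - (S\setminus S_0)$ is connected (it retains the nonempty $S_0$ and keeps both sides connected), i.e.\ $S\setminus S_0$ is not $H$-disconnected; moreover any $uv$-cut set $F' = [A',B']_H \subseteq S$ must equal $S_0$, because a path inside the component $A$ of $H-S$ from $u$ to a hypothetical vertex of $A\cap B'$ would use no edge of $S \supseteq F'$ yet would have to cross $[A',B']_H$, forcing $A\cap B' = \emptyset$ and symmetrically $B\cap A' = \emptyset$, hence $A=A'$ and $B=B'$. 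Thus $S$ is counted by $n^H_k$. For the reverse inclusion, let $S$ be $H$-disconnected with a unique $uv$-cut set $F' = [A_0,B_0]_H \subseteq S$ and with $H^\ast := H-(S\setminus F')$ connected; since $F' \cap (S\setminus F') = \emptyset$, the edges of $H^\ast$ between $A_0$ and $B_0$ are exactly $F'$, so the components of $H-S = H^\ast - F'$ are the components of $H^\ast[A_0]$ together with those of $H^\ast[B_0]$. Let $A$ be the component of $u$ in $H-S$ (so $A \subseteq A_0$); if $A \subsetneq A_0$ then $[A, V(H)\setminus A]_H$ is a $uv$-disconnected set of $H$ contained in $S$, hence contains a $uv$-cut set, necessarily $F'$ by uniqueness, which forces every edge of $F'$ to meet $A$ and therefore to be incident to no vertex of $A_0\setminus A$; but then any component of $H^\ast[A_0]$ lying inside $A_0 \setminus A$ has no incident $H^\ast$-edge leaving it, contradicting connectedness of $H^\ast$. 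The symmetric argument on $B_0$ gives $B_0 = B$, so $H - S$ has exactly the two components $A$ and $B$, and $S$ is counted by $m^H_k$.

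It remains to dispose of the ``otherwise'' branch: if $P^G_{k-1} = 1$ then every $(k-1)$-subset of $E(G)$ is $G$-disconnected, hence so is every $k$-subset, so $P^G_k = 1$; and $P^G_M = 1$ since deleting all $M$ edges leaves the $N \ge 2$ vertices of $G$ isolated. Together with $P^G_0 = 0$ this also shows the recursion is well defined by induction on $k$. The main obstacle is the identity $m^H_k = n^H_k$, and inside it the reverse inclusion: converting the set-theoretic hypothesis ``$S$ contains a unique $uv$-cut set $F'$ and $S\setminus F'$ is not $H$-disconnected'' into the structural conclusion ``$H-S$ has exactly two components'' is where the work lies, the key point being that a stray component of $H^\ast - F'$ either is totally isolated in the connected graph $H^\ast$ or yields a second $uv$-cut set contained in $S$.
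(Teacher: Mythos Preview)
Your proof is correct and follows the same overall structure as the paper's: split the $G$-disconnected $k$-sets according to whether the new edge $e^+=(uv)$ lies in the set, handle the case $e^+\in S$ via $H$-disconnected $(k-1)$-sets, and for $e^+\notin S$ subtract off those $H$-disconnected $k$-sets that become connected upon adding $e^+$.

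The one substantive difference is in how Case~(ii) is analyzed. The paper argues via a decomposition $F^H_k=\bigl(\bigcup_i F_i\bigr)\cup F'$ into disjoint $H$-cut sets plus a non-disconnecting remainder, and then splits on whether $l\ge 2$, or $l=1$ with $F_1$ a $uv$-cut set, or $l=1$ with $F_1$ not a $uv$-cut set. You instead observe directly that $(H-S)+e^+$ is connected iff $H-S$ has exactly two components separating $u$ and $v$, call that count $m^H_k$, and then prove the set-level identity $m^H_k=n^H_k$ by a bijection argument using the bipartition $[A,B]_H$. Your route is arguably cleaner: it avoids having to justify that the decomposition~(4.3) of the paper is well defined (the paper does not address uniqueness of the $F_i$), and it makes explicit the nontrivial direction---that ``unique $uv$-cut set inside $S$ with connected complement'' forces $H-S$ to have exactly two components---which the paper's case split leaves somewhat implicit. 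Both arguments ultimately rest on the same fact, namely that adding $e^+$ can repair at most one cut, so the two proofs are close cousins rather than genuinely different strategies.
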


\begin{proof}
  Consider two discontiguous vertices $u, v \in V(H) \ (u \neq v)$ and a graph $G=(V(G), E(G))$ with $V(G) = V(H)$ and $E(G) = E(H) \cup \{(uv)\}$. 
Then we can divide the $G$-disconnected sets with size $k$, say $F^G_k$, into two cases:
\begin{enumerate}[label=(\roman*), nosep]
\item $(uv) \in F^G_k$;
\item $(uv) \not \in F^G_k$.
\end{enumerate}

Since Case (i) means the edge $(uv)$ fails, $F^G_k$ for Case (i) is a $G$-disconnected set if and only if $F_G \setminus \{(uv)\}$ is an $H$-disconnected set with size $k-1$.
In other words, the number of $H$-disconnected sets with size $k-1$, that is $P^H_{k-1} \binom{M-1}{k-1}$, is equal to that of $F^G_k$ for Case (i).

In order to count the number of $F^G_k$ in Case (ii),
we consider the $H$-disconnected sets with size $k$, say $F^H_k$.
For any $F^H_k$, there exists the set of $H$-cut sets $\mathcal{F}_H := \{F_i \subset E(H)\, | \, i=1, \ldots, l \text{ and } F_i \cap F_j = \emptyset \ (i \neq j)\}$, and a non-$H$-disconnected set $F' \subset E(H)$, such that
\begin{equation}
  \label{eq:divFH}
  F^H_k = \left(\bigcup_{i=1}^l F_i  \right) \cup F'.
\end{equation}
From~\eqref{eq:divFH}, we can divide Case (ii) into three cases:
\begin{enumerate}[label=(\alph*), nosep]
\item $l \geq 2$;
\item $l=1$ and $F_1$ is a $uv$-cut set;
\item $l=1$ and $F_1$ is not a $uv$-cut set.
\end{enumerate}

For Case (a), since the graph $H - F^H_k$ has more than three connected components,
the sets $F^H_k$ are still $G$-disconnected sets with size $k$.

Also, for Case (c), the sets $F^H_k$ are still $G$-disconnected sets.
Since $F_1$ is not a $(uv)$-cut set in $H$, there exist two distinct vertices $w_1, w_2 \in V(H) \ (w_1 \neq w_2)$,  such that $F_1$ is a $(w_1 w_2)$-cut set and at least one of $w_1$ and $w_2$ can reach neither $u$ nor $v$ in $H - F^H_k$.
If both $w_1$ and $w_2$ can reach either $u$ and $v$, then $w_1$ and $w_2$ are connected in $H - F^H_j$.
Indeed, since there exists a path between $u$ and $v$, one can reach from $w_1$ to $w_2$ through the $uv$-path.
This implies that there is no $w_1 w_2$-walk in $G$ and $F^H_k$ are $G$-disconnected sets with size $k$.

On the other hand, for Case (b), there are no $uv$-walks, but all the vertices can reach either $u$ or $v$. 
If there exists a vertex $w$ that can reach neither $u$ nor $v$, the three distinct vertices $u$, $v$, and $w$ cannot reach each other in $H - F^H_k$.
This implies that $H - F^H_k$ has at least three connected components, which contradicts $l=1$.
Therefore, by adding the edge $(uv)$ to $H - F^H_k$, all the vertices can reach both $u$ and $v$ in $H - F^H_k + (uv)$.
Thus, only in Case (b), the $H$-disconnected sets with size $k$ are not $G$-disconnected sets with size $k$.
Here we denote the number of $H$-disconnected sets belonging to Case (b) by $n^H_k$.

Since the number of $H$-disconnected sets with size $k$ is $P^H_k \binom{M-1}{k}$,
we have $P^H_k \binom{M-1}{k} - n^H_k$, which is the number of $G$-disconnected sets with size $k$ in Case (ii).

Furthermore, it is clear that $P^G_k=1$ if $P^G_{k-1}=1$.
Thus, we have equation~\eqref{eq:N}. 
\end{proof}

From these theorems, we have the following main theorem.
\begin{theorem}
  Let $M \geq 1$ and $N \geq 1$ be integers,
  $G_M =(V(G_M), E(G_M))$ be a simple connected undirected graph with $|V(G_M)|=N$ and $|E(G_M)|=M$,
  and $P^{M}_{k}$ be the distribution function of its corresponding random $K$-out-of-$M$ system.
  Then the recurrence relations of $P^{M}_{k}$ with respect to the number of edges $M$ are as follows:
  \begin{itemize}
  \item When $M=1$,
  \begin{equation}
    P^1_k = 1 \ \text{for any $k$}\label{main:eq10};
   \end{equation}
 \item When $M \geq 2$,
      \begin{numcases}{P^{M}_{k} = }
        \displaystyle \left[ P^{M-1}_{k} \binom{M-1}{k} + P^{M-1}_{k-1} \binom{M-1}{k-1} - n^{M-1}_{k} \right] \Big/ \binom{M}{k} & \label{main:eq21}\\[10pt]
          \qquad\text{if} \quad V(G_M) = V(G_{M-1}), E(G_M)=E(G_{M-1}) \cup \{(u,v)\}, \notag\\
          \qquad\quad\text{ where } u, v \in V(H) \text{ and discontiguous};
          1 \leq j \leq M-1; \text{ and } P^G_{k-1} < 1, \notag\\[10pt]
          \displaystyle \left[ P^{M-1}_k \binom{M-1}{k} + \binom{M-1}{k-1} \right] \Big/ \binom{M}{k} &
          \label{main:eq22}\\[10pt]
          \qquad\text{if} \quad V(G_M) = V(G_{M-1}) \cup \{v^+\}, E(G_M)=E(G_{M-1}) \cup \{(u,v^+)\},
          \notag\\
          \qquad\quad\text{ where } u \in V(G_{M-1}) \text{ and } v^+ \not\in V(G_{M-1});
          \ 1 \leq k \leq M-1; \text{ and } P^G_{k-1} < 1,\notag \\[10pt]
          1 \quad  \text{otherwise}, \label{main:eq23}&
        \end{numcases}
  where we define $\binom{M-1}{0} = 1$ and $P^{M-1}_0 \binom{M-1}{0} = 0$ for any $M \geq 2$,
  and $n^{M-1}_{k}$ is the number of $(uv)$-disconnected sets of $G_{M-1}$ that include only one $(uv)$-cut set.
\end{itemize}
\end{theorem}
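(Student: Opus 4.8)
The plan is to derive the recurrence as an essentially direct consequence of Theorems~\ref{thm:N+1} and~\ref{thm:N}, bridged by a short graph-reduction lemma; the base case $M=1$ is handled by inspection. For $M=1$, a simple connected graph with a single edge consists of exactly two vertices joined by that edge, so deleting the edge disconnects the graph and the one-element edge set is already a $G$-disconnected set; hence $P^1_1 = 1$, and since $P_k = P(K \le k)$ is nondecreasing in $k$ this forces $P^1_k = 1$ for all $k$, which is~\eqref{main:eq10}.

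For $M \ge 2$ the crux is the following reduction lemma, which I would prove first: every simple connected undirected graph $G_M$ with $M$ edges is obtained from some simple connected undirected graph $G_{M-1}$ with $M-1$ edges by one of the two constructions appearing in Theorems~\ref{thm:N} and~\ref{thm:N+1}. I would argue by cases on whether $G_M$ contains a cycle. If it does, pick an edge $(uv)$ lying on a cycle and set $G_{M-1} := G_M - (uv)$; then $G_{M-1}$ is connected (the two endpoints remain joined along the rest of the cycle, and no other adjacency is affected), simple, has $M-1$ edges, and $u,v$ are non-adjacent in $G_{M-1}$ precisely because $G_M$ is simple, so $G_M = G_{M-1} + (uv)$ realizes the construction of Theorem~\ref{thm:N}. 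If $G_M$ is acyclic, it is a tree with $M \ge 2$ edges and therefore has at least two leaves; deleting a leaf $v^+$ together with its unique incident edge yields a tree $G_{M-1}$ with $M-1$ edges, and $G_M$ is recovered as the construction of Theorem~\ref{thm:N+1}.

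Granting the lemma, the proof concludes by substituting the distribution function $P^{M-1}_k$ of $G_{M-1}$ (which exists, being the distribution function of the random $K$-out-of-$(M-1)$ system attached to $G_{M-1}$) into whichever theorem applies: Theorem~\ref{thm:N} returns~\eqref{main:eq21}, with the quantity $n^{M-1}_k$ being exactly the $n^H_k$ of that theorem; Theorem~\ref{thm:N+1} returns~\eqref{main:eq22}; and in both cases the residual clause $P^G_{k-1} = 1$, together with the range $k = M$ where $P^M_M = 1$ trivially (removing all edges disconnects any graph on at least two vertices), yields the value $1$ of~\eqref{main:eq23}. I expect the only genuine work to lie in the reduction lemma — specifically, checking that each reduction preserves simplicity and connectedness and, in the cycle case, leaves the two chosen endpoints non-adjacent so that Theorem~\ref{thm:N} is legitimately applicable; once that is in place the recurrence is just a transcription of the two theorems.
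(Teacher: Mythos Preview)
Your proposal is correct and follows the paper's approach: the paper offers no separate proof for this theorem, simply stating ``From these theorems, we have the following main theorem'' after Theorems~\ref{thm:N+1} and~\ref{thm:N}, so your reduction to those two results is exactly what the paper intends. Your explicit base case and the reduction lemma (that every $G_M$ arises from some connected $G_{M-1}$ via one of the two constructions) add rigor the paper leaves implicit, since the paper frames everything as a forward graph-generation procedure where $G_{M-1}$ is given rather than recovered.
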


\section{Simple cases}

This section deals with two simple cases: a tree and a cycle (Figure~\ref{fig:tree_cycle}).
In these cases, we do not have to use the recurrent relations to determine their corresponding random $K$-out-of-$N$ systems.

\begin{definition}[tree, cycle]\mbox{}
  \begin{itemize}[nosep]
  \item A graph $G$ is called a \emph{tree} if the graph has no closed walks and is connected.
  \item A graph $G$ is called a \emph{cycle} if the graph is a closed path.
  \end{itemize}
\end{definition}
\begin{figure}[htbp]
  \centering
  \includegraphics[page=2,scale=.5]{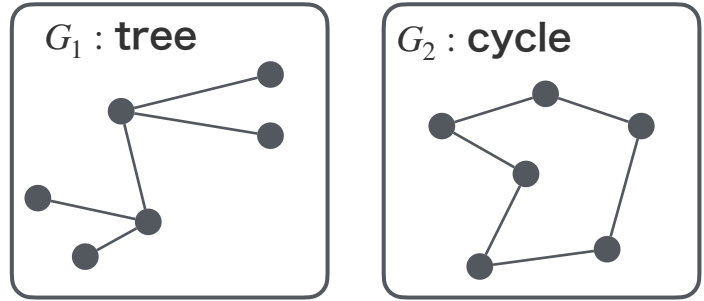}
  \caption{Graph $G_1$ is an example of a tree, graph $G_2$ that of a cycle.}
  \label{fig:tree_cycle}
\end{figure}

First, we describe well-known properties of trees and cycles.

\begin{lemma}[Properties of trees]\label{lem:tree}
  Let $G=(V(G), E(G))$ be a simple undirected graph with $|V(G)| = N$.
  If $N \geq 2$, then
  the following (a)-(d) are equivalent:
  \begin{enumerate}[nosep, label=(\alph*)]
  \item $G$ is connected and has no closed walks, which is the definition of a tree;
  \item $G$ is connected and has $N-1$ edges;
  \item $G$ is connected, but if one edge is deleted then $G$ is not connected;
  \item there is only one path from a given vertex to another distinct vertex.
  \end{enumerate}
\end{lemma}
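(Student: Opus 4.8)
The plan is to establish the cycle of implications (a) $\Rightarrow$ (b) $\Rightarrow$ (c) $\Rightarrow$ (d) $\Rightarrow$ (a). All four conditions entail that $G$ is connected, and I read ``has no closed walks'' in (a) in its only sensible sense, namely ``$G$ contains no cycle'' (equivalently, no closed walk with at least one edge all of whose edges are distinct), since any graph with an edge admits the trivial back-and-forth closed walk. Along the way I will isolate one auxiliary fact, used only for (b) $\Rightarrow$ (c): every connected simple graph on $N$ vertices has at least $N-1$ edges.

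For (a) $\Rightarrow$ (b) I would induct on $N$, the base $N=2$ being immediate. For the inductive step, first note that a tree with $N \ge 2$ vertices has a vertex of degree $1$: take a path $v_0 e_1 v_1 \cdots e_\ell v_\ell$ of maximum length (one exists, $G$ being finite, connected, and containing an edge); were $v_\ell$ adjacent to some vertex other than $v_{\ell-1}$, that vertex would lie off the path --- contradicting maximality --- or on the path --- producing a cycle, contradicting (a). Deleting such a degree-$1$ vertex together with its incident edge leaves a connected acyclic graph on $N-1$ vertices, which by the inductive hypothesis has $N-2$ edges; restoring the vertex and edge gives $N-1$.

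For the auxiliary fact, observe that in a connected graph one may repeatedly delete an edge lying on a cycle: such a deletion keeps the graph connected, because the remainder of the cycle reroutes any path that used the deleted edge, and it strictly lowers the edge count, so the process terminates at a connected acyclic graph --- a tree --- which by (a) $\Rightarrow$ (b) has exactly $N-1$ edges; as edges were only removed, the original had at least $N-1$. Hence, if $G$ satisfies (b) but some edge $e$ has $G-e$ still connected, then $G-e$ would be a connected graph on $N$ vertices with only $N-2$ edges, a contradiction; so every edge deletion disconnects $G$, which is (c). For (c) $\Rightarrow$ (d): $G$ is connected, so at least one path joins any two distinct vertices $u,v$; if there were two distinct $uv$-paths $P_1 \ne P_2$, their symmetric difference (as edge sets) is non-empty and assigns every vertex even degree --- the two endpoints receive odd $+$ odd, every other vertex even $+$ even, and removing shared edges preserves parities --- hence contains a cycle $C$, and for any $e \in C$ the graph $G-e$ is still connected, contradicting (c); thus the $uv$-path is unique. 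Finally (d) $\Rightarrow$ (a): uniqueness (with existence) of paths makes $G$ connected, and if $G$ contained a cycle $v_0 e_1 v_1 \cdots e_m v_0$ --- necessarily with $m \ge 3$, $G$ being simple --- then the single edge $e_1$ and the reverse arc $e_m e_{m-1} \cdots e_2$ would be two distinct $v_0 v_1$-paths, a contradiction; so $G$ is connected and acyclic.

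I expect the only delicate points to be the path-surgery steps --- that deleting an edge on a cycle preserves connectivity, and that the symmetric difference of two distinct paths contains a cycle --- which must be spelled out carefully in the walk/path vocabulary fixed in Section~2 (in particular using that any walk joining two vertices contains a path between them); the remainder is routine induction and edge counting.
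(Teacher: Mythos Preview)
Your argument is correct and follows the standard textbook cycle (a) $\Rightarrow$ (b) $\Rightarrow$ (c) $\Rightarrow$ (d) $\Rightarrow$ (a); each step is sound, including the parity argument on the symmetric difference of two $uv$-paths and the auxiliary lower bound $|E| \ge N-1$ obtained by peeling off cycle edges. Your reading of ``no closed walks'' as ``no cycles'' is also the only sensible one, and it is good that you flag it explicitly.

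As for comparison with the paper: the paper does not actually prove this lemma. It is introduced with the phrase ``First, we describe well-known properties of trees and cycles'' and is stated without proof, serving purely as background for the propositions that follow. So there is nothing to compare your argument against --- you have supplied a proof where the authors simply invoked standard graph-theoretic folklore (in line with their references to Bondy and Murty).
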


\begin{lemma}[Properties of cycles]\label{lem:cycle}
  A closed walk $\pi$ is a cycle if and only if one cannot make another closed walk by deleting an edge from $\pi$.
\end{lemma}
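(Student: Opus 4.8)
The plan is to prove the two implications separately, in each case reducing to the characterization of trees in Lemma~\ref{lem:tree}. Throughout I would read ``closed walk'' in the nondegenerate sense already implicit in Lemma~\ref{lem:tree}(a) (so that a tree genuinely has ``no closed walks''), treating $\pi$ as a closed walk with distinct edges and identifying it with the subgraph it traces.

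First I would handle the forward implication: if $\pi$ is a cycle, then deleting an edge cannot yield another closed walk. Writing the cycle as $v_0 e_1 v_1 \cdots e_k v_k$ with $v_k = v_0$ and all of $v_0, \ldots, v_{k-1}$ and $e_1, \ldots, e_k$ distinct, I would delete an arbitrary edge $e_i = (v_{i-1} v_i)$. The surviving $k-1$ edges on the $k$ vertices form the single path $v_i v_{i+1} \cdots v_0 \cdots v_{i-1}$, which is connected and has one fewer edge than vertices, hence is a tree by Lemma~\ref{lem:tree}(b). By Lemma~\ref{lem:tree}(a) a tree has no closed walk, so no other closed walk can be assembled from the remaining edges.

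For the converse I would argue the contrapositive: if a closed walk $\pi$ is not a cycle, then some edge can be deleted while a closed walk still remains. The structural fact I would establish is that every closed walk which is not itself a cycle strictly contains a cycle. Indeed, if the intermediate vertices of $\pi$ are all distinct then $\pi$ is already a cycle; otherwise some interior vertex repeats, $v_i = v_j$ with $i < j$, and the segment $v_i e_{i+1} \cdots e_j v_j$ is a shorter closed walk. Iterating this reduction terminates at a genuine cycle $C$ whose edge set is a \emph{proper} subset of that of $\pi$ (the complementary portion contributes at least one edge). I would then pick any edge $e \in E(\pi) \setminus E(C)$, which exists precisely because $C$ is proper, and delete it; the cycle $C$ survives intact and supplies the required other closed walk.

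The main obstacle is not the combinatorial core but pinning down the degenerate cases so that the equivalence is exact. A literal reading of the walk definition admits back-and-forth walks such as $v\,e\,w\,e\,v$ and walks that traverse a cycle more than once, for which the statement can fail; I would therefore fix at the outset the convention, consistent with Lemma~\ref{lem:tree}, that a closed walk has distinct edges, and take ``deleting an edge'' and ``another closed walk'' to refer to the traced subgraph. With that convention in place, the termination of the reduction in the converse and the absence of a closed walk in a tree in the forward direction are the only points needing care, and both follow immediately from Lemma~\ref{lem:tree}.
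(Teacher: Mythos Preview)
The paper does not actually prove Lemma~\ref{lem:cycle}: it is presented, together with Lemma~\ref{lem:tree}, as a ``well-known property'' and is simply stated without argument. So there is no proof in the paper to compare your proposal against.

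On its own terms, your argument is sound once the conventions you spell out are adopted. The forward direction is clean: a cycle minus one edge is a path on the same vertex set, hence a tree by Lemma~\ref{lem:tree}(b), and trees contain no nondegenerate closed walks by Lemma~\ref{lem:tree}(a). For the converse, the ``shorten at a repeated vertex'' reduction does terminate in a cycle $C$ with $E(C)\subsetneq E(\pi)$ under your distinct-edges convention, and deleting any edge of $E(\pi)\setminus E(C)$ leaves $C$ intact. You are also right that the chief difficulty is definitional rather than combinatorial: without restricting to closed walks with distinct edges (equivalently, to the traced subgraph), counterexamples such as $v\,e\,w\,e\,v$ or a cycle traversed twice would break the equivalence. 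Making that convention explicit, as you do, is exactly what is needed, and it is consistent with the way Lemma~\ref{lem:tree}(a) is used elsewhere in the paper.
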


For trees, the following propositions immediately hold from Lemma~\ref{lem:tree}.
\begin{proposition}
  If $G=(V(G), E(G))$ is a tree with $|E(G)|= M$,
  then $G$ is a random $K$-out-of-$M$ system with the distribution functions $P^G_i = 1$ for any $i \geq 1$.
\end{proposition}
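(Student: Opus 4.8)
The plan is to argue directly from the combinatorial description of $P^G_k$ established in Sections~3 and~4: for a graph with $M$ edges, $P^G_k$ equals the number of $G$-disconnected sets of size $k$ divided by $\binom{M}{k}$, the total number of edge subsets of size $k$. Hence it suffices to show that \emph{every} nonempty subset $F \subseteq E(G)$ with $|F| = k$ is a $G$-disconnected set, i.e.\ that $G - F$ fails to be connected.

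First I would invoke Lemma~\ref{lem:tree}: since $G$ is a tree with $N = |V(G)| \ge 2$ (the degenerate case $N = 1$, $M = 0$ being vacuous), property~(c) says that deleting any single edge from $G$ already disconnects it. So fix $F \subseteq E(G)$ with $|F| = k \ge 1$ and choose any $e \in F$; then $G - e$ is disconnected. The graph $G - F$ has the same vertex set as $G - e$ but only a subset of its edges, and removing edges from an already disconnected graph cannot reconnect it, so $G - F$ is disconnected as well. Therefore all $\binom{M}{k}$ subsets of size $k$ are $G$-disconnected sets, giving $P^G_k = \binom{M}{k}\big/\binom{M}{k} = 1$ for every $1 \le k \le M$. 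For $k > M$ there is nothing to prove, since $K$ takes values in $\{1,\ldots,M\}$, so $P^G_k = P(K \le k) = 1$ trivially; the same remark covers all $k \ge 1$ when $M = 1$, in agreement with \eqref{main:eq10}.

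As a consistency check I would also note that the conclusion drops out of the recurrence: every tree on $M$ edges arises from a tree on $M-1$ edges by attaching a pendant edge (the case $|V(G)| = N+1$ of Theorem~\ref{thm:N+1}), so if $P^H_k \equiv 1$ then \eqref{eq:N+1} yields $P^G_k = \bigl[\binom{M-1}{k} + \binom{M-1}{k-1}\bigr]\big/\binom{M}{k} = 1$ by Pascal's identity, with the base case $M = 1$ supplied by \eqref{main:eq10}.

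I do not expect a genuine obstacle here: the statement is essentially immediate from the tree characterization. The only points needing care are the monotonicity remark — that deleting further edges preserves disconnectedness — and the bookkeeping for the trivial ranges ($k > M$, and the case $M = 1$), so that the blanket claim ``$P^G_i = 1$ for any $i \ge 1$'' holds with no exceptions.
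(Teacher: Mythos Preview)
Your proof is correct and matches the paper's approach: the paper simply declares that the proposition ``immediately holds from Lemma~\ref{lem:tree}'' without further argument, and your write-up is precisely the spelled-out version of that inference via property~(c). The additional consistency check through the recurrence~\eqref{eq:N+1} and Pascal's identity is a nice touch that the paper does not include.
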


\begin{proposition}
  If $G=(V(G), E(G))$ is a simple connected undirected graph with $|V(G)|=N$ and $|E(G)|=N-1$,
  then $G$ is a random $K$-out-of-$(N-1)$ system with the distribution function $P^G_i = 1$ for any $i \geq 1$.
\end{proposition}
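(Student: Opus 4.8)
The statement to be proved is the second Proposition in Section 5: if $G$ is a simple connected undirected graph with $|V(G)|=N$ and $|E(G)|=N-1$, then the associated random $K$-out-of-$(N-1)$ system has $P^G_i=1$ for all $i\ge 1$. The plan is to reduce this directly to the preceding Proposition via Lemma~\ref{lem:tree}. Concretely, I would first invoke the equivalence (a)$\Leftrightarrow$(b) in Lemma~\ref{lem:tree}: a simple connected undirected graph on $N\ge 2$ vertices with exactly $N-1$ edges is a tree. (The case $N=1$ is vacuous, since then $M=N-1=0$ and there is no random $K$-out-of-$0$ system; alternatively one remarks $P^G_i=1$ trivially because the one-vertex graph has no edges to fail, though more cleanly the hypothesis $M\ge 1$ in the surrounding development excludes it.) Having identified $G$ as a tree with $|E(G)|=M=N-1$ edges, I would then apply the previous Proposition verbatim to conclude $P^G_i=1$ for every $i\ge 1$.

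To make the argument self-contained rather than merely a pointer, I would also spell out why a tree forces $P^G_i=1$, since that is the substantive content. Recall from Section~3--4 that $P^G_k$ is the ratio of $G$-disconnected sets of size $k$ to all edge subsets of size $k$; so $P^G_i=1$ means \emph{every} edge subset of size $i$ is a $G$-disconnected set. For $i\ge 1$ this follows from the equivalence (a)$\Leftrightarrow$(c) in Lemma~\ref{lem:tree}: deleting any single edge from a tree disconnects it, hence $\{e\}$ is a $G$-disconnected set for each edge $e$, and therefore any superset of $\{e\}$ — i.e.\ any nonempty edge subset — is also $G$-disconnected. Since the smallest nonempty subsets (the singletons) are already disconnecting, all $\binom{M}{i}$ subsets of size $i\ge 1$ are disconnecting, giving $P^G_i = \binom{M}{i}\big/\binom{M}{i} = 1$.

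There is essentially no obstacle here: the proposition is an immediate corollary, and the only thing to be careful about is the degenerate boundary case $N=1$ (equivalently $M=0$), which should be excised by the standing assumption $M\ge 1$ or dispatched in one line. The proof I would write is therefore just two or three sentences: cite Lemma~\ref{lem:tree}(a)$\Leftrightarrow$(b) to see $G$ is a tree, then cite the preceding Proposition. If the authors prefer a free-standing argument, substitute the singleton-disconnecting-set reasoning of the previous paragraph in place of the appeal to the earlier Proposition.

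\begin{proof}
Assume $N\ge 2$ (the case $N=1$ is excluded since then $|E(G)|=0$). By the equivalence of (a) and (b) in Lemma~\ref{lem:tree}, a simple connected undirected graph with $N$ vertices and $N-1$ edges is a tree. Hence $G$ is a tree with $|E(G)|=N-1$ edges, and the preceding Proposition applies, giving $P^G_i = 1$ for every $i\ge 1$.
\end{proof}
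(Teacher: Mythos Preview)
Your proposal is correct and matches the paper's approach: the paper does not give a separate proof but simply notes that this proposition (together with the preceding one) ``immediately hold[s] from Lemma~\ref{lem:tree},'' which is exactly the reduction you spell out via (a)$\Leftrightarrow$(b) and then the tree proposition. Your added self-contained argument via (a)$\Leftrightarrow$(c) is a harmless elaboration of the same idea.
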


Regarding cycles, the following propositions immediately hold from Lemma~\ref{lem:cycle}.
\begin{proposition}
  If $G=(V(G), E(G))$ is a cycle with $|E(G)|=M$,
  then $G$ is a random $K$-out-of-$M$ system with the distribution functions $P^G_1 = 0$ and $P^G_i =1$ for any $i \geq 2$.
\end{proposition}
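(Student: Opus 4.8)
The plan is to compute directly the number of $G$-disconnected sets of each size $k$ and divide by $\binom{M}{k}$, using the ratio characterization of $P^G_k$ recalled in Section~3. Since $G$ is a simple cycle, it has $M \geq 3$ vertices as well as $M$ edges, and for any $F \subseteq E(G)$ with $|F| = k$ the graph $G - F$ is a spanning subgraph of the cycle $G$. Such a subgraph is a disjoint union of paths and isolated vertices, hence a forest, so the number of its connected components equals $|V(G)| - |E(G-F)| = M - (M-k) = k$. This single observation controls both cases.

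First I would treat $k = 1$: by the count above, $G - F$ has exactly one connected component, i.e.\ $G - F$ is connected, so no single edge of $G$ forms a $G$-disconnected set. (Equivalently, $G-F$ is a Hamiltonian path of $G$; or one may invoke Lemma~\ref{lem:cycle}, since deleting one edge of the cycle leaves no closed walk.) Hence the number of size-$1$ $G$-disconnected sets is $0$, and $P^G_1 = 0 / \binom{M}{1} = 0$. Next, for $2 \leq k \leq M$, the same count gives $G - F$ exactly $k \geq 2$ connected components, so $G - F$ is disconnected for every such $F$; thus all $\binom{M}{k}$ edge subsets of size $k$ are $G$-disconnected sets, and $P^G_k = \binom{M}{k}/\binom{M}{k} = 1$. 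Extending by the usual convention that $P^G_k = 1$ for $k \geq M$ (as already used in the tree propositions) gives $P^G_i = 1$ for all $i \geq 2$.

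I do not expect a real obstacle here; the only step requiring a moment's care is the elementary fact that a spanning subgraph of a cycle is a forest, which is what pins the component count down to a function of $k$ alone and makes the whole argument a one-line count. As an alternative route, one could instead obtain the result from Theorem~\ref{thm:N}: build $G$ from the path $H$ on the same $M$ vertices (a tree, so $P^H_k \equiv 1$) by adding the edge joining its two endpoints, verify that $n^H_1 = M-1$ while $n^H_k = 0$ for $k \geq 2$, compute $P^G_1 = [\,(M-1) + 0 - (M-1)\,]/M = 0$ and $P^G_2 = [\binom{M-1}{2} + \binom{M-1}{1}]/\binom{M}{2} = 1$, and then propagate $P^G_k = 1$ for larger $k$ through the ``otherwise'' clause. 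I would, however, present the direct combinatorial argument, since it is shorter and needs no auxiliary bookkeeping.
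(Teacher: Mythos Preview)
Your argument is correct. The paper itself gives no detailed proof of this proposition, merely stating that it ``immediately holds from Lemma~\ref{lem:cycle}''; your forest-component count (any proper spanning subgraph of a cycle is a forest, hence has exactly $k$ components when $k$ edges are removed) makes that immediacy explicit, and your alternative derivation via Theorem~\ref{thm:N} is also sound.
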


\begin{proposition}
  If $G=(V(G), E(G))$ is a cycle with $|V(G)|=N$,
  then $G$ is a random $K$-out-of-$N$ system with the distribution functions $P^G_1 = 0$ and $P^G_i = 1$ for any $i \geq 2$.
 \end{proposition}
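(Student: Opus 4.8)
The plan is to recognize that this statement is the immediately preceding proposition rewritten in terms of the vertex count. By definition a cycle is a closed path $v_0 e_1 v_1 e_2 \cdots e_N v_0$ whose vertices $v_0,\dots,v_{N-1}$ are distinct, so it has exactly as many edges as vertices; thus $|E(G)| = |V(G)| = N$, and substituting $M = N$ into the preceding proposition yields $P^G_1 = 0$ and $P^G_i = 1$ for all $i \ge 2$. For completeness I would also record the short direct argument, since the structural facts it relies on are precisely those furnished by Lemma~\ref{lem:cycle}.

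For the direct argument, recall that $P^G_k$ is the number of $G$-disconnected sets of size $k$ divided by $\binom{N}{k}$, the number of all edge subsets of size $k$. First I would treat $k=1$: deleting a single edge from the closed path $G$ leaves a path on the same $N$ vertices, which is connected, so no single edge is a $G$-disconnected set and $P^G_1 = 0/\binom{N}{1} = 0$. Next, for $k \ge 2$: removing any two distinct edges of the cycle splits its vertex set into two nonempty arcs with no path between them, so every two-element edge subset is a $G$-disconnected set; and once a set is $G$-disconnected, so is every superset of it. Hence all $\binom{N}{k}$ edge subsets of size $k \ge 2$ are $G$-disconnected sets, giving $P^G_k = \binom{N}{k}/\binom{N}{k} = 1$.

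I do not expect a genuine obstacle here: the claim is essentially a re-indexing of the previous proposition. The only point needing care is the structural input — that a cycle on $N$ vertices has exactly $N$ edges, that deleting one of them preserves connectivity, and that deleting any two of them destroys it — all of which follow from the definition of a cycle as a closed path together with Lemma~\ref{lem:cycle}.
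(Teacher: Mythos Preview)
Your proposal is correct and matches the paper's approach: the paper gives no explicit proof at all, merely stating that this proposition (like the preceding one) ``immediately holds from Lemma~\ref{lem:cycle}.'' Your reduction to the previous proposition via $|E(G)|=|V(G)|=N$, together with the direct verification that one deleted edge leaves a path while two deleted edges disconnect the cycle, is exactly the elaboration the paper leaves implicit.
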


 \section{Examples}

This section illustrates graph generation, from a graph with one edge to a graph with five edges (Figure~\ref{fig:ex}).
 
 Let $P^i_k$ be the cumulative probability up to $i$ of $G_i$.
 Then $P^1_k = 1$ for any $k$ from \eqref{main:eq10}.
 From \eqref{main:eq22} and \eqref{main:eq23},
 \begin{equation*}
   P^2_1 = \left[P^1_1 \binom{1}{1} + 1\right] \big/ \binom{1}{2} = 1, \quad P^2_k = 1 \ (k \geq 2).
 \end{equation*}
 From \eqref{main:eq21} and \eqref{main:eq23},
 \begin{align*}
   P^3_1 &= \left[P^2_1 \binom{2}{1} + P^2_0 \binom{2}{0} - 2\right] \big/ \binom{1}{3} = 0,\\
   P^3_2 &= \left[P^2_2 \binom{2}{2} + P^2_1 \binom{2}{1} - 0\right] \big/ \binom{2}{3} = 1,\\
   P^3_k &= 1 \ (k \geq 3).
 \end{align*}
 From \eqref{main:eq22} and \eqref{main:eq23},
 \begin{align*}
   P^4_1 &= \left[P^3_1 \binom{3}{1} + \binom{3}{0}\right] \big/ \binom{1}{4} = \frac{1}{4},\\
   P^4_2 &= \left[P^3_2 \binom{3}{2} + \binom{3}{1}\right] \big/ \binom{2}{4} = 1,\\
   P^4_k &= 1 \ (k \geq 3).
 \end{align*}
 From \eqref{main:eq21} and \eqref{main:eq23},
 \begin{align*}
   P^5_1 &= \left[P^4_1 \binom{4}{1} + P^4_0 \binom{4}{0} - 1\right] \big/ \binom{1}{5} = 0\\
   P^5_2 &= \left[P^4_2 \binom{4}{2} + P^4_1 \binom{4}{1} - 5\right] \big/ \binom{2}{5} = \frac{1}{5},\\
   P^5_3 &= \left[P^3_1 \binom{4}{1} + P^4_2 \binom{4}{2} - 0\right] \big/ \binom{3}{5} = 1,\\
   P^5_k &= 1 \ (k \geq 3).
 \end{align*}

 \begin{figure}[htbp]
   \centering
   \includegraphics[page=4,scale=.5]{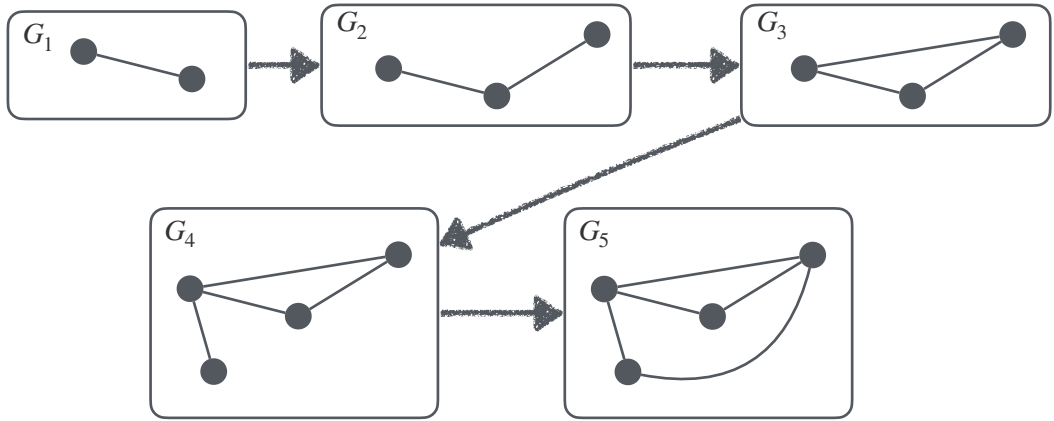}
   \caption{Graph generation, from a graph with one edge ($G_1$) to a graph with five edges ($G_5$).}
   \label{fig:ex}
 \end{figure}

 \bibliographystyle{siam}
 \bibliography{GraphKoutofN.bib}
 
\end{document}